\newtheorem{theo}{Theorem}[section]
\newtheorem{lem}{Lemma}[section]
\newtheorem{definition}{Definition}[section]
\begin{document}
\setcounter{page}{1}
\vspace{2cm}
\author{Khaoula. Bouguetof $^1$, Nasser-eddine. Tatar$^2$}
\title[\centerline{ On nonexistence of global solutions for a semilinear  equation...\hspace{0.5cm}}]{On nonexistence of global solutions for a semilinear  equation with Hilfer- Hadamard fractional derivative}

\thanks{\noindent $^1$ Laboratory of Mathematics, Informatics and Systems, University of Larbi Tebessi, Tebessa 12002, Algeria\\
\noindent $^2$ Department of Mathematics and Statistics, King Fahd University of Petroleum and Minerals,
Dhahran 31261, Saudi Arabia \\
\indent \,\,\, e-mail:khaoula.bouguetof@univ-tebessa.dz, tatarn@kfupm.edu.sa\\
}

\begin{abstract}
For the following semilinear  equation with Hilfer- Hadamard fractional derivative
\begin{equation*}
\mathcal{D}^{\alpha_1,\beta}_{a^+} u-\Delta\mathcal{D}^{\alpha_2,\beta}_{a^+} u-\Delta u
=\vert u\vert^p, \hspace*{0.3cm} t>a>0,
\hspace*{0.3cm}  x\in\Omega,
\end{equation*}
where $\Omega\subset \mathbb{R}^N$ $(N\geqslant 1)$, $p>1$, $0<\alpha _{2}<\alpha _{1}<1$ and $0<\beta <1$. $\mathcal{D}^{\alpha_i,\beta}_{a^+}$ $(i=1,2)$ is the Hilfer- Hadamard fractional derivative of order $\alpha_i$ and of type $\beta$, we establish the 
necessary conditions for the existence of global solutions.
\par
\noindent Keywords:  Pseudo-parabolic problem,  time fractional derivatives, structural damping, nonexistence\\
\noindent MSC 2010: Primary 26A33; Secondary 35B44

\end{abstract}
\maketitle 
\bigskip

  \section{Introduction}\label{sec:1}

\setcounter{section}{1}
\hspace*{0.3cm} We consider the following initial boundary value problem 
\begin{equation}\label{a}
\begin{cases}
\mathcal{D}^{\alpha_1,\beta}_{a^+} u-\Delta\mathcal{D}^{\alpha_2,\beta}_{a^+} u-\Delta u
=\vert u\vert^p, \hspace*{0.3cm}& t>a>0,
\hspace*{0.3cm}  x\in\Omega,\\
u(t,x)=0, \hspace*{0.3cm}& t>a>0,
\hspace*{0.3cm}  x\in\partial\Omega,\\
\bigg(\mathcal{D}^{(\beta-1)(1-\alpha_1)}_{a^+} u\bigg)(a,x)=u_0(x), \hspace*{0.3cm}& x\in \Omega,
\end{cases}
\end{equation}
where $\Omega $ is a bounded domain $\Omega\subset \mathbb{R}^N$ $(N\geqslant 1)$ with smooth boundary $\partial \Omega$, $p>1$, $0<\alpha _{2}<\alpha _{1}<1$, $0<\beta <1$ and $\Delta$ denotes the Laplacian operator with respect to the $x$ variable. The operator $ \mathcal{D}^{\alpha,\beta}_{a^+} $ is the Hilfer- Hadamard fractional derivative of order $\alpha$ and of type $\beta$, which we will be defined carefully  in a further part of this paper.
The equation\thinspace(\ref{a}) is a generalization of the well-known pseudo-parabolic equation of first order. The integer derivative is replaced by a fractional derivative in the sense of Hilfer-Hadamard. The second Hilfer-Hadamard fractional derivative of the Laplacian is allowed to be different from the first one.\\
\hspace*{0.3cm} Our objective is to find the range of $p$ for which nontrivial  solutions cannot exist for all time. This leads us to shed some light on the interaction of the nonlinear source term with $\Delta\mathcal{D}^{\alpha_2,\beta}_{a^+} u$. The analysis is based mainly on the test function method\thinspace\cite{mi}.\\
Let us first  recall some works related to our problem.\\
\hspace*{0.3cm} The semilinear pseudo-parabolic equation
\begin{equation}\label{j}
\begin{cases}
u_{t}-k\Delta u_{t}-\Delta u=|u|^{p},\hspace*{0.3cm}&(t,x)\in (0,\infty )\times 
\Omega,\\
u(t,x)=0, \hspace*{0.3cm}& (t,x)\in (0,\infty )\times\partial\Omega,\\
u(0,x)=u_0(x), \hspace*{0.3cm}& x\in \Omega,
\end{cases}
\end{equation}
 arises in many field of science and engineering : the aggregation of population\thinspace\cite{padra3n2004effect} and the nonstationary processes in semiconductors\thinspace\cite{korpusov2003three}. Eq.\thinspace(\ref{j}) is also called a Sobolev type equation, Sobolev Galpern type equation or the Benjamin Bona Mahony Burgers equation\thinspace\cite{al2011blow}. Many researchers have studied the existence and blow-up of solutions for problem\thinspace(\ref{j})\thinspace\cite{sun2019global, dangasymptotic}, by using different methods, such as the potential well method and the Galerkin method combined with the compactness method\thinspace. When $k=0$, Eq.\thinspace(\ref{j}) reduces to the heat equation 
 \begin{equation}\label{jj}
\begin{cases}
u_{t}-\Delta u=|u|^{p},\hspace*{0.3cm}&(t,x)\in (0,\infty )\times 
\Omega,\\
u(t,x)=0, \hspace*{0.3cm}& (t,x)\in (0,\infty )\times\partial\Omega,\\
u(0,x)=u_0(x), \hspace*{0.3cm}& x\in \Omega.
\end{cases}
\end{equation}
Fujita\thinspace\cite{fujita1966blowing} studied the global existence of mild solutions to\thinspace $(\ref{jj})$,  if $p>1+\frac{2}{N}$ and small initial data. In addition, he proved that the mild solution cannot  exist globally when $1<p< 1+\frac{2}{N}$ and  $ u_0\neq 0$.\\
\hspace*{0.3cm} In\thinspace\cite{weissler1981existence}, Weissler proved that if $p=1+\frac{2}{N}$ (critical case) and $u_0$ is small enough in $L^{q_{c}}(\mathbb{R}^N)$, $q_c=N(p-1)/2$, then the solution of\thinspace $(\ref{jj})$ exists globally.\\
\hspace*{0.3cm} Xu and Su\thinspace\cite{xu2013global}  showed that  all nontrivial solutions $u$ of the following problem 
\begin{equation}\label{jjj}
\begin{cases}
u_{t}-\Delta u_{t}-\Delta u=|u|^{p},\hspace*{0.3cm}&(t,x)\in (0,\infty )\times 
\Omega,\\
u(t,x)=0, \hspace*{0.3cm}& (t,x)\in (0,\infty )\times\partial\Omega,\\
u(0,x)=u_0(x), \hspace*{0.3cm}& x\in \Omega,
\end{cases}
\end{equation}
where $1<p<\infty$ if $N=1,2$; $1<p\leqslant\frac{N+2}{N-2}$ if $N\geqslant 3$, exist for all time under some conditions and they obtained sufficient conditions for nonexistence of solutions. In 2017, Xu and Zhou\thinspace\cite{xu2018lifespan} gave new blow-up and lifespan conditions of problem\thinspace(\ref{jjj}).\\ 
\hspace*{0.3cm}The rest of the paper is organized as follows: In Section\thinspace 2, we recall some definitions about Hilfer-Hadamard fractional integrals and derivatives. In Section\thinspace 3\thinspace, we study the
absence of global nontrivial weak solutions.
\section{Preliminary}
In this section,  we present some
results and basic properties of fractional calculus.  For more details, we refer to\thinspace\cite{kk, k, kkkk, kkk}.
\begin{definition}\label{yuu}  The Hadamard fractional integrals of order $\alpha>0$ of a function $\varphi\in L^q[a,b]$ $(1\leqslant q<\infty, 0< a\leqslant b\leqslant +\infty)$, are defined by
\begin{align*}
\big(\mathcal{I}^\alpha_{a^+}\varphi\big)(t)=\frac{1}{\Gamma(\alpha)}\int_a^t
\bigg(\log\frac{t}{\tau}\bigg)^{\alpha-1}\varphi(\tau)~\frac{d\tau}{\tau},\hspace*{0.3cm} a<t<b,
\end{align*}
and
\begin{align*}
\big(\mathcal{I}^\alpha_{b^-}\varphi\big)(t)=\frac{1}{\Gamma(\alpha)}\int_t^b
\bigg(\log\frac{\tau}{t}\bigg)^{\alpha-1}\varphi(\tau)~\frac{d\tau}{\tau}, \hspace*{0.3cm} a<t<b.
\end{align*}
\end{definition}
\begin{definition}\label{yu} Let $0<a<t<b$ and $n-1<\alpha<n$. The Hadamard fractional derivatives of order $\alpha$ for a function $\varphi$ are defined by
\begin{align*}
\big(\mathcal{D}^\alpha_{a^+}\varphi\big)(t)=\frac{1}{\Gamma(n-\alpha)}
\bigg(t\frac{d}{dt}\bigg)^n\int_a^t
\bigg(\log\frac{t}{\tau}\bigg)^{n-\alpha-1}\varphi(\tau)~\frac{d\tau}{\tau}= \delta^n\bigg( I^{n-\alpha}_{a^+} \varphi\bigg)(t),
\end{align*}
and
\begin{align*}
\big(\mathcal{D}^\alpha_{b^-}\varphi\big)(t)=\frac{(-1)^n}{\Gamma(n-\alpha)}
\bigg(t\frac{d}{dt}\bigg)^n\int_t^b\bigg(\log\frac{\tau}{t}\bigg)^{n-\alpha-1}\varphi(\tau)~\frac{d\tau}{\tau}=(-1)^n\delta^n\bigg( I^{n-\alpha}_{b^-} \varphi\bigg)(t),
\end{align*}
where $ \delta=t\frac{d}{dt}$, $n=[\alpha]+1$, $[\alpha]$ denotes the integer part of number $\alpha$.
\end{definition}
\begin{definition} Let $0<\alpha<1$ and $0\leqslant\beta\leqslant 1$. The Hilfer- Hadamard fractional derivative of order $\alpha$ and  type $\beta$  is defined by
\begin{align*}
\big(\mathcal{D}^{\alpha,\beta}_{a^+} \varphi\big)(t)=\bigg(\mathcal{I}^{\beta(1-\alpha)}_{a^+}\delta\mathcal{I}^{(1-\beta)(1-\alpha)}_{a^+}\varphi\bigg)(t),
\end{align*}
that is,
\begin{align}\label{h}
\big(\mathcal{D}^{\alpha,\beta}_{a^+} \varphi\big)(t)=\mathcal{I}^{\beta(1-\alpha)}_{a^+}\bigg(t\frac{d}{dt}\bigg)\bigg(\mathcal{I}^{(1-\beta)(1-\alpha)}_{a^+}\varphi\bigg)(t).
\end{align}
\end{definition}
\begin{definition}\label{loo} Let $[a,b]$ be a finite interval of the half-axis $\mathbb{R}^+$ and $0\leqslant \gamma<1$. We introduce the weighted spaces of continuous functions
\begin{align}
C_{\gamma, \log}[a,b]&=\bigg\lbrace \varphi: [a,b]\rightarrow \mathbb{R}: \bigg(\log\frac{t}{a}\bigg)^\gamma \varphi(t)\in C[a,b]\bigg\rbrace,\\
C_{1-\gamma,\log}^\gamma[a,b]&=\bigg\lbrace \varphi\in C_{1-\gamma,\log}[a,b]: \mathcal{D}^\gamma_{a^+}\varphi\in C_{1-\gamma,\log}[a,b]\bigg\rbrace ,
\end{align}
and
\begin{align}
C_{\delta,\gamma}^n[a,b]=\bigg\lbrace \varphi: [a,b]\rightarrow \mathbb{R}: \delta^k \varphi\in C[a,b],\; 0\leqslant k\leqslant n-1,\; \delta^n \varphi\in C_{\gamma, \log}[a,b]\bigg\rbrace,
\end{align}
 where $ \delta=t\frac{d}{dt}$ and $n\in\mathbb{N}$. In particular, when $n=0$ we define
\begin{align*}
C_{\delta,\gamma}^0[a,b]=C_{\gamma, \log}[a,b].
\end{align*}
\end{definition}
\begin{definition}\label{mp}The Banach space $X^p_c(a,b)$ $(1\leqslant p\leqslant\infty, c\in\mathbb{R})$ consists  of those real-valued Lebesgue measurable functions $\varphi: (a,b)\rightarrow \mathbb{R}$ such that 
\begin{align}
&\Vert \varphi\Vert_{X^p_c}= \bigg(\int_a^b \big\vert t^c \varphi(t)\big\vert^p\frac{dt}{t}\bigg)^{1/p}<\infty,\hspace*{0.3cm} p<\infty,\\
&\Vert \varphi\Vert_{X^\infty_c}=\text{ess}\sup_{a\leqslant t\leqslant b}\bigg\vert t^c \varphi(t)\bigg\vert<\infty.
\end{align}
When $c=1/p$, we see that $X^p_{1/p}(a,b)=L^p(a,b)$.
\end{definition}
\begin{lem}[\cite{kkk}]\label{w} Let $\alpha>0$, $1\leqslant p\leqslant \infty$ and $\frac{1}{p}+\frac{1}{q}=1$. If $\varphi\in L^p(a,b)$ and $\Psi\in X^q_{-1/p}(a,b)$, then
\begin{align*}
\int_a^b \varphi(t) (\mathcal{I}^\alpha_{a^+}\Psi)(t)\frac{dt}{t}=\int_a^b ( \mathcal{I}^\alpha_{b^-}\varphi)(t) \Psi(t) \frac{dt}{t}.
\end{align*}
\end{lem}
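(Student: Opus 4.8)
The plan is to prove this as a Dirichlet--Fubini interchange: expand both Hadamard integrals by Definition~\ref{yuu}, observe that each side is an iterated integral over the same triangle $\Delta=\{(t,\tau):a<\tau<t<b\}$, and swap the order of integration. Explicitly,
\[
\int_a^b \varphi(t)\,(\mathcal{I}^\alpha_{a^+}\Psi)(t)\,\frac{dt}{t}
=\frac{1}{\Gamma(\alpha)}\int_a^b\!\!\int_a^t \varphi(t)\Big(\log\tfrac{t}{\tau}\Big)^{\alpha-1}\Psi(\tau)\,\frac{d\tau}{\tau}\,\frac{dt}{t},
\]
and, exchanging the roles of $t$ and $\tau$ (so that the inner region $\{a<\tau<t\}$ becomes $\{\tau<t<b\}$),
\[
\frac{1}{\Gamma(\alpha)}\int_a^b \Psi(\tau)\Big(\int_\tau^b \varphi(t)\Big(\log\tfrac{t}{\tau}\Big)^{\alpha-1}\frac{dt}{t}\Big)\frac{d\tau}{\tau}
=\int_a^b (\mathcal{I}^\alpha_{b^-}\varphi)(\tau)\,\Psi(\tau)\,\frac{d\tau}{\tau},
\]
where the last equality is just Definition~\ref{yuu} for the right Hadamard integral. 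So, once the interchange is justified, the identity follows at once.

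The substance of the proof is therefore the verification that Fubini's theorem applies, i.e.\ that the double integral converges absolutely:
\[
\int_a^b\!\!\int_a^t |\varphi(t)|\Big(\log\tfrac{t}{\tau}\Big)^{\alpha-1}|\Psi(\tau)|\,\frac{d\tau}{\tau}\,\frac{dt}{t}
=\Gamma(\alpha)\int_a^b |\varphi(t)|\,(\mathcal{I}^\alpha_{a^+}|\Psi|)(t)\,\frac{dt}{t}.
\]
I would bound the right-hand side by H\"older's inequality in the form adapted to the spaces of Definition~\ref{mp}: writing the $\tfrac{dt}{t}$-integral as $\int_a^b\big(t^{1/p}|\varphi(t)|\big)\big(t^{-1/p}(\mathcal{I}^\alpha_{a^+}|\Psi|)(t)\big)\tfrac{dt}{t}$ and using $\tfrac1p+\tfrac1q=1$ yields the estimate $\Gamma(\alpha)\,\|\varphi\|_{X^p_{1/p}}\,\|\mathcal{I}^\alpha_{a^+}|\Psi|\|_{X^q_{-1/p}}$. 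Since $X^p_{1/p}(a,b)=L^p(a,b)$, the first factor is finite by hypothesis; for the second, I would use that the Hadamard integration operator $\mathcal{I}^\alpha_{a^+}$ is bounded on $X^q_c(a,b)$ (take $c=-1/p$), so that $\|\mathcal{I}^\alpha_{a^+}|\Psi|\|_{X^q_{-1/p}}\le C\,\||\Psi|\|_{X^q_{-1/p}}=C\,\|\Psi\|_{X^q_{-1/p}}<\infty$. This boundedness is the standard mapping property recorded in \cite{kkk}; if a self-contained argument is wanted, the change of variables $t=e^{s}$ turns $\mathcal{I}^\alpha_{a^+}$ into the Riemann--Liouville operator $I^\alpha$ on the finite interval $[\log a,\log b]$ and the $X^q_c$-norm into a weighted $L^q$-norm whose weight is bounded above and below, so the classical $L^q$-boundedness of $I^\alpha$ on a bounded interval suffices.

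With absolute convergence in hand, Fubini legitimises the swap carried out above, and the identity is proved. The step I expect to require the most care is exactly this $X^q$-boundedness / absolute-integrability check: one must confirm that the weight exponent $c=-1/p$ is admissible (it is, because the interval is finite and bounded away from $0$) and that the H\"older pairing matched is precisely $L^p$ versus $X^q_{-1/p}$ --- which is why the hypotheses are stated asymmetrically, with $\varphi\in L^p$ but $\Psi\in X^q_{-1/p}$, rather than as a symmetric $L^p$/$L^q$ pair.
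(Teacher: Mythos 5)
The paper does not prove this lemma at all: it is quoted verbatim from the cited reference \cite{kkk} as a known integration-by-parts formula for Hadamard fractional integrals. Your Fubini--Tonelli argument, with the absolute-convergence check done via H\"older for the pairing $X^p_{1/p}=L^p$ against $X^q_{-1/p}$ and the boundedness of $\mathcal{I}^\alpha_{a^+}$ on $X^q_{-1/p}(a,b)$ (which, as you note, reduces after the substitution $t=e^s$ to the $L^q$-boundedness of the Riemann--Liouville operator on a bounded interval, the weight being bounded above and below since $0<a<b<\infty$), is correct and is exactly the standard proof of such Dirichlet-type formulas, so it is consistent with the source the paper relies on.
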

\begin{lem}[\cite{kkkk}]\label{ww} Assume $\varphi\in C^1_{\gamma,\log}[a,b]$, for $a<t<b$, $0<\gamma<1$ and $0<\alpha<1$. Then $ \mathcal{D}^\alpha_{a^+}$ exists on $(a,b]$ and $ \mathcal{D}^\alpha_{b^-}$ on $[a,b)$ and can be represented as
\begin{align*}
 \big(\mathcal{D}^\alpha_{a^+}\varphi\big)(t)&=\frac{\varphi(a)}{\Gamma(1-\alpha)}\bigg(\log\frac{t}{a} \bigg)^{-\alpha}+\frac{1}{\Gamma(1-\alpha)}\int_a^t \bigg(\log\frac{t}{\tau} \bigg)^{-\alpha}\varphi'(\tau) d\tau,\\
 \big(\mathcal{D}^\alpha_{b^-}\varphi\big)(t)&=\frac{\varphi(b)}{\Gamma(1-\alpha)}\bigg(\log\frac{b}{t} \bigg)^{-\alpha}-\frac{1}{\Gamma(1-\alpha)}\int_t^b \bigg(\log\frac{\tau}{t} \bigg)^{-\alpha}\varphi'(\tau) d\tau,
\end{align*}
respectively.
\end{lem}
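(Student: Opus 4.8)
\emph{Proof proposal.} The plan is to reduce the Hadamard operators to the classical Riemann--Liouville ones through the substitution $t\mapsto\log t$, which in practice amounts to one integration by parts in the defining integral followed by a single differentiation. Since $0<\alpha<1$, Definition~\ref{yu} gives $n=[\alpha]+1=1$, so $\mathcal{D}^{\alpha}_{a^{+}}\varphi=\delta\,\mathcal{I}^{1-\alpha}_{a^{+}}\varphi$ and $\mathcal{D}^{\alpha}_{b^{-}}\varphi=-\delta\,\mathcal{I}^{1-\alpha}_{b^{-}}\varphi$, where $\delta=t\frac{d}{dt}$; the two cases are mirror images of one another, so I would describe the left-sided one in detail.

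First I would rewrite $\mathcal{I}^{1-\alpha}_{a^{+}}\varphi$ by smoothing its kernel. From $\frac{d}{d\tau}\bigl(\log\frac{t}{\tau}\bigr)^{1-\alpha}=-\frac{1-\alpha}{\tau}\bigl(\log\frac{t}{\tau}\bigr)^{-\alpha}$, an integration by parts (in which the boundary term at $\tau=t$ vanishes because $1-\alpha>0$ and $\varphi$ is bounded near $t$) gives
\[
\mathcal{I}^{1-\alpha}_{a^{+}}\varphi(t)=\frac{1}{\Gamma(2-\alpha)}\Bigl[\Bigl(\log\tfrac{t}{a}\Bigr)^{1-\alpha}\varphi(a)+\int_{a}^{t}\Bigl(\log\tfrac{t}{\tau}\Bigr)^{1-\alpha}\varphi'(\tau)\,d\tau\Bigr],
\]
where I used $(1-\alpha)\Gamma(1-\alpha)=\Gamma(2-\alpha)$. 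Next I would apply $\delta=t\frac{d}{dt}$ to this identity: the first term yields $(1-\alpha)\bigl(\log\frac{t}{a}\bigr)^{-\alpha}\varphi(a)$, while for the integral the Leibniz boundary term at $\tau=t$ again vanishes and differentiation under the integral sign is legitimate because the resulting kernel $\bigl(\log\frac{t}{\tau}\bigr)^{-\alpha}$ is still integrable ($\alpha<1$), producing $(1-\alpha)\int_{a}^{t}\bigl(\log\frac{t}{\tau}\bigr)^{-\alpha}\varphi'(\tau)\,d\tau$. Collecting terms and using $\frac{1-\alpha}{\Gamma(2-\alpha)}=\frac{1}{\Gamma(1-\alpha)}$ gives precisely the asserted representation of $\mathcal{D}^{\alpha}_{a^{+}}\varphi$ on $(a,b]$, and since every step was an equality between well-defined quantities, this also shows the derivative exists there. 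The formula for $\mathcal{D}^{\alpha}_{b^{-}}\varphi$ on $[a,b)$ comes out the same way from $\mathcal{D}^{\alpha}_{b^{-}}\varphi=-\delta\,\mathcal{I}^{1-\alpha}_{b^{-}}\varphi$, the sign contributions from the leading $-\delta$, from $\partial_{t}\log\frac{\tau}{t}=-\frac1t$, and from the integration by parts combining to leave the single minus sign in front of the integral in the stated expression.

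The only delicate matters are bookkeeping. Under the hypothesis $\varphi\in C^{1}_{\gamma,\log}[a,b]$ one must check that $\varphi(a),\varphi(b)$ are finite boundary values, that the weighted integrals of $\varphi'$ converge near the endpoints, and that the two manipulations above (integration by parts, then Leibniz under a singular integral) are permitted. I would handle all of these at once by the change of variable $\sigma=\log\tau$, $g(\sigma)=\varphi(e^{\sigma})$: since then $\delta=\frac{d}{d(\log t)}$ and $\log\frac{t}{\tau}=\log t-\log\tau$, one has $\mathcal{I}^{\alpha}_{a^{+}}\varphi(t)=(I^{\alpha}_{(\log a)^{+}}g)(\log t)$ and hence $\mathcal{D}^{\alpha}_{a^{+}}\varphi(t)=(D^{\alpha}_{(\log a)^{+}}g)(\log t)$, so the statement is just the classical representation of the Riemann--Liouville derivative of a continuously differentiable function on $[\log a,\log b]$, transported back; the right-sided operator is treated identically. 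I expect the main obstacle to be purely notational --- tracking the change of variables, the two boundary terms, and the Gamma-function constant --- rather than conceptual.
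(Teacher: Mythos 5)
This lemma is not proved in the paper at all: it is imported verbatim from \cite{kkkk}, so there is no internal argument to compare yours with. Your proof is the standard one and the computation is correct: for $0<\alpha<1$ one has $n=1$, so $\mathcal{D}^{\alpha}_{a^+}\varphi=\delta\,\mathcal{I}^{1-\alpha}_{a^+}\varphi$; the integration by parts smooths the kernel (the boundary term at $\tau=t$ vanishes because $1-\alpha>0$, the one at $\tau=a$ yields $(\log\frac{t}{a})^{1-\alpha}\varphi(a)/\Gamma(2-\alpha)$), and applying $\delta=t\frac{d}{dt}$ together with $\frac{1-\alpha}{\Gamma(2-\alpha)}=\frac{1}{\Gamma(1-\alpha)}$ gives exactly the stated left-sided formula; the sign bookkeeping for $\mathcal{D}^{\alpha}_{b^-}=-\delta\,\mathcal{I}^{1-\alpha}_{b^-}$ likewise checks out, and the substitution $\sigma=\log\tau$ indeed identifies everything with the classical Riemann--Liouville representation.

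Two small points deserve to be made explicit. First, the hypothesis $\varphi\in C^1_{\gamma,\log}[a,b]$ (read through Definition~\ref{loo}, i.e.\ $\varphi\in C[a,b]$ with $\delta\varphi\in C_{\gamma,\log}[a,b]$) allows $\varphi'$ to blow up like $(\log\frac{\tau}{a})^{-\gamma}$ as $\tau\to a^+$, so $\varphi$ is in general not $C^1$ up to $a$; your integration by parts at the endpoint $a$, and your appeal to ``the classical representation of the Riemann--Liouville derivative of a continuously differentiable function,'' should therefore be run in the absolutely continuous setting. This is harmless: since $\gamma<1$ the singularity is integrable, $g(\sigma)=\varphi(e^{\sigma})$ is absolutely continuous on $[\log a,\log b]$, and the AC version of the Riemann--Liouville representation applies; it also legitimizes the differentiation under the singular integral, which by itself is not fully justified by saying the differentiated kernel is integrable. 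Second, with such a weighted derivative the right-sided formula at the endpoint $t=a$ requires $\alpha+\gamma<1$ for the integral $\int_a^b(\log\frac{\tau}{a})^{-\alpha}\varphi'(\tau)\,d\tau$ to converge; on $(a,b)$ — which is all the paper ever uses — there is no issue. With these caveats acknowledged, your argument is a complete and standard proof of the quoted lemma.
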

\begin{lem}[\cite{kkk}]\label{www} Let $0\leqslant\gamma<1$ and $0<\alpha$. If $\gamma\leqslant \alpha$, then the operator $\mathcal{I}^\alpha_{a^+}$ is bounded from $C_{\gamma,\log}(a,b)$ into $C(a,b)$. In particular, it is bounded in $C_{\gamma,\log}(a,b).$
\end{lem}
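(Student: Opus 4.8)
\emph{Proof plan.} The plan is to reduce the claim to one Beta-function estimate by means of a logarithmic change of variables. First I would take an arbitrary $\varphi\in C_{\gamma,\log}(a,b)$ and write it as $\varphi(\tau)=\big(\log(\tau/a)\big)^{-\gamma}\psi(\tau)$ with $\psi\in C[a,b]$ and $\|\psi\|_{C[a,b]}=\|\varphi\|_{C_{\gamma,\log}}$. Plugging this into Definition \ref{yuu} and carrying out the substitution $u=\log(\tau/a)/\log(t/a)$ — so that $\tau=a\,e^{u\log(t/a)}$, $d\tau/\tau=\log(t/a)\,du$ and $\log(t/\tau)=(1-u)\log(t/a)$ — transforms the Hadamard integral into
\[
\big(\mathcal{I}^{\alpha}_{a^{+}}\varphi\big)(t)=\frac{\big(\log(t/a)\big)^{\alpha-\gamma}}{\Gamma(\alpha)}\int_{0}^{1}(1-u)^{\alpha-1}u^{-\gamma}\,\psi\!\big(a\,e^{u\log(t/a)}\big)\,du .
\]

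I would then estimate the right-hand side. Because $\alpha>0$ and $\gamma<1$, the function $(1-u)^{\alpha-1}u^{-\gamma}$ is integrable on $(0,1)$ with $\int_{0}^{1}(1-u)^{\alpha-1}u^{-\gamma}\,du=B(\alpha,1-\gamma)=\Gamma(\alpha)\Gamma(1-\gamma)/\Gamma(\alpha+1-\gamma)$, hence
\[
\big|\big(\mathcal{I}^{\alpha}_{a^{+}}\varphi\big)(t)\big|\le\frac{\Gamma(1-\gamma)}{\Gamma(\alpha+1-\gamma)}\,\big(\log(t/a)\big)^{\alpha-\gamma}\,\|\varphi\|_{C_{\gamma,\log}} .
\]
This is exactly the point where the hypothesis $\gamma\le\alpha$ is used: it makes $\big(\log(t/a)\big)^{\alpha-\gamma}$ bounded on $[a,b]$ (by $\big(\log(b/a)\big)^{\alpha-\gamma}$, with no blow-up as $t\to a^{+}$), which delivers the required uniform bound. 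For continuity I would note that $(u,t)\mapsto\psi\!\big(a\,e^{u\log(t/a)}\big)$ is continuous on $[0,1]\times[a,b]$ and is dominated by the $t$-independent integrable function $\|\psi\|_{\infty}(1-u)^{\alpha-1}u^{-\gamma}$; dominated convergence then shows the integral depends continuously on $t$, and multiplication by the continuous factor $\big(\log(t/a)\big)^{\alpha-\gamma}/\Gamma(\alpha)$ gives $\mathcal{I}^{\alpha}_{a^{+}}\varphi\in C(a,b)$. Combined with the bound above, $\mathcal{I}^{\alpha}_{a^{+}}$ is bounded from $C_{\gamma,\log}(a,b)$ into $C(a,b)$.

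Finally, for the ``in particular'' part, I would use that $t\mapsto\big(\log(t/a)\big)^{\gamma}$ is bounded on $[a,b]$, so that $C(a,b)\hookrightarrow C_{\gamma,\log}(a,b)$ continuously; composing this embedding with the previous step yields boundedness of $\mathcal{I}^{\alpha}_{a^{+}}$ on $C_{\gamma,\log}(a,b)$. I expect the only real obstacle to be the handling of the two colliding singularities — the kernel's $(\log(t/\tau))^{\alpha-1}$ near $\tau=t$ and the weight's $(\log(\tau/a))^{-\gamma}$ near $\tau=a$, which meet as $t\to a^{+}$; the change of variables is what decouples them into the separate factors $(1-u)^{\alpha-1}$ and $u^{-\gamma}$ and makes transparent that $\gamma\le\alpha$ is the sharp threshold, while the rest of the argument is routine.
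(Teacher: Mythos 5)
Your proof is correct. Note that the paper does not prove this lemma at all — it is quoted from \cite{kkk} — and your argument (the substitution $u=\log(\tau/a)/\log(t/a)$ that decouples the singularities, the resulting Beta integral giving $\big|\big(\mathcal{I}^{\alpha}_{a^{+}}\varphi\big)(t)\big|\leqslant \frac{\Gamma(1-\gamma)}{\Gamma(\alpha+1-\gamma)}\big(\log(t/a)\big)^{\alpha-\gamma}\Vert\varphi\Vert_{C_{\gamma,\log}}$, the observation that $\gamma\leqslant\alpha$ keeps the prefactor bounded as $t\to a^{+}$, and the embedding $C\hookrightarrow C_{\gamma,\log}$ for the ``in particular'' part) is precisely the standard proof of that cited result, so there is nothing to correct beyond the cosmetic remark that at $t=a$ the value of $\mathcal{I}^{\alpha}_{a^{+}}\varphi$ is understood as the limit, which is harmless since the statement concerns $C(a,b)$.
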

\begin{lem}[ \cite{kkkk}]\label{wwww} Let $\alpha>0$, $\beta>0$ and $0<\gamma<1$. Assume $0<a<b<\infty$, then for $\varphi\in C_{\gamma,\log}(a,b)$, 
\begin{align}\label{oj}
\mathcal{I}^\alpha_{a^+} \mathcal{I}^\beta_{a^+}\varphi= \mathcal{I}^{\alpha+\beta}_{a^+}\varphi,
\end{align}
for each $t\in (a,b]$. In particular, if $\varphi\in C[a,b] $ the relation (\ref{oj}) is valid at any point $t\in[a,b]$.
\end{lem}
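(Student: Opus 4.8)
The plan is to reduce the asserted semigroup identity to the classical Beta‑integral computation, with an application of Tonelli--Fubini as the only point needing care. Expanding both Hadamard integrals by Definition~\ref{yuu}, I would start from
\[
\big(\mathcal{I}^{\alpha}_{a^{+}}\mathcal{I}^{\beta}_{a^{+}}\varphi\big)(t)
=\frac{1}{\Gamma(\alpha)\Gamma(\beta)}\int_{a}^{t}\Big(\log\frac{t}{\sigma}\Big)^{\alpha-1}
\Big(\int_{a}^{\sigma}\Big(\log\frac{\sigma}{\tau}\Big)^{\beta-1}\varphi(\tau)\,\frac{d\tau}{\tau}\Big)\frac{d\sigma}{\sigma},
\]
and interchange the order of integration over the triangle $\{a<\tau<\sigma<t\}$ to get
\[
\big(\mathcal{I}^{\alpha}_{a^{+}}\mathcal{I}^{\beta}_{a^{+}}\varphi\big)(t)
=\frac{1}{\Gamma(\alpha)\Gamma(\beta)}\int_{a}^{t}\varphi(\tau)
\Big(\int_{\tau}^{t}\Big(\log\frac{t}{\sigma}\Big)^{\alpha-1}\Big(\log\frac{\sigma}{\tau}\Big)^{\beta-1}\frac{d\sigma}{\sigma}\Big)\frac{d\tau}{\tau}.
\]

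For the inner $\sigma$‑integral I would use the substitution $u=\log(\sigma/\tau)\big/\log(t/\tau)$, under which $\log(t/\sigma)=(1-u)\log(t/\tau)$, $\log(\sigma/\tau)=u\log(t/\tau)$ and $d\sigma/\sigma=\log(t/\tau)\,du$; it becomes $\big(\log(t/\tau)\big)^{\alpha+\beta-1}\int_{0}^{1}(1-u)^{\alpha-1}u^{\beta-1}\,du=\big(\log(t/\tau)\big)^{\alpha+\beta-1}B(\beta,\alpha)$. Since $B(\beta,\alpha)=\Gamma(\alpha)\Gamma(\beta)/\Gamma(\alpha+\beta)$, substituting back collapses the constants and leaves exactly $\big(\mathcal{I}^{\alpha+\beta}_{a^{+}}\varphi\big)(t)$ in the form of Definition~\ref{yuu}, which is the identity on $(a,b]$. (Equivalently, one may observe that $\mathcal{I}^{\alpha}_{a^{+}}$ is the Riemann--Liouville integral on $[\log a,\log b]$ conjugated by $t\mapsto\log t$, so the statement follows from the classical Riemann--Liouville semigroup law; the direct computation above is just the unwound version of this.)

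The step requiring justification — and the main, if modest, obstacle — is the interchange of integrations, since for $0<\alpha,\beta<1$ both kernels are singular and $\varphi$ may grow like $\big(\log(\tau/a)\big)^{-\gamma}$ as $\tau\to a^{+}$. I would handle this by Tonelli's theorem applied to the nonnegative integrand $\big(\log(t/\sigma)\big)^{\alpha-1}\big(\log(\sigma/\tau)\big)^{\beta-1}|\varphi(\tau)|$: performing the same Beta computation on $|\varphi|$ shows the iterated integral of the absolute value equals $\Gamma(\alpha)\Gamma(\beta)\,\big(\mathcal{I}^{\alpha+\beta}_{a^{+}}|\varphi|\big)(t)$, and this is finite because $|\varphi|\in C_{\gamma,\log}(a,b)$ forces $|\varphi(\tau)|\le C\big(\log(\tau/a)\big)^{-\gamma}$ near $\tau=a$, so the defining integral converges (all exponents $\alpha+\beta-1$ and $-\gamma$ exceed $-1$); alternatively, by Lemma~\ref{www} one has $\mathcal{I}^{\beta}_{a^{+}}|\varphi|\in C_{\gamma,\log}(a,b)$ and the remaining $\sigma$‑integral can be bounded directly. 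Either way Fubini's theorem applies and the formal interchange is legitimate.

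Finally, the ``in particular'' clause follows once the identity is known on $(a,b]$: if $\varphi\in C[a,b]$ then $\varphi$ is bounded, and each of $\mathcal{I}^{\beta}_{a^{+}}\varphi$, $\mathcal{I}^{\alpha+\beta}_{a^{+}}\varphi$ and $\mathcal{I}^{\alpha}_{a^{+}}\mathcal{I}^{\beta}_{a^{+}}\varphi$ is continuous on $[a,b]$ and vanishes at $t=a$, because the defining integral over $[a,a]$ is $0$ while the elementary bound
\[
\big|\big(\mathcal{I}^{\mu}_{a^{+}}\psi\big)(t)\big|\le\frac{\Vert\psi\Vert_{\infty}}{\Gamma(\mu)}\int_{a}^{t}\Big(\log\frac{t}{\tau}\Big)^{\mu-1}\frac{d\tau}{\tau}=\frac{\Vert\psi\Vert_{\infty}}{\mu\,\Gamma(\mu)}\Big(\log\frac{t}{a}\Big)^{\mu}\longrightarrow 0\quad(t\to a^{+})
\]
(used with $\mu=\beta$, $\mu=\alpha+\beta$, and, since $\mathcal{I}^{\beta}_{a^{+}}\varphi$ is then bounded, for the composed operator as well) gives right‑continuity at $a$. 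Since the two sides agree on $(a,b]$ and both equal $0$ at $t=a$, the identity holds at every $t\in[a,b]$, which completes the proof plan.
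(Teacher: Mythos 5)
Your proof is correct: the Tonelli justification via the weight bound $|\varphi(\tau)|\leqslant C\big(\log(\tau/a)\big)^{-\gamma}$ (all exponents exceeding $-1$), the Beta-integral evaluation of the inner integral, and the treatment of the endpoint $t=a$ for $\varphi\in C[a,b]$ are all sound. Note that the paper itself gives no proof of this lemma — it is quoted from the reference \cite{kkkk} — and your argument is essentially the standard one for that result (Dirichlet/Fubini interchange plus the Beta function, equivalently the Riemann--Liouville semigroup law transported by $t\mapsto\log t$), so your blind reconstruction matches the expected proof.
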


Throughout the next section, we take $0<\gamma=\alpha_1+\beta-\alpha_1\beta<1$.

\section{Blow-up of solutions}
\hspace*{0.3cm} First, we give the definition of weak solution of\thinspace$(\ref{a})$. After we prove the non-existence of nontrivial solutions.
\setcounter{section}{3}
\begin{definition}
\label{ab}Let $u_0\in C_0(\Omega)$ and $0<\alpha_2<\alpha_1<1.$ The function
\thinspace $u\in C^\gamma_{1-\gamma,\log}([a,b], C_0(\Omega))$\thinspace\ is a weak solution
of problem\thinspace $(\ref{a})$,  if
\begin{align}\label{weak}
 \int_{\Omega}\int_a^T \tilde{\varphi}\mathcal{D}^{\alpha_1,\beta}_{a^+} u dtdx-\int_{\Omega}\int_a^T \Delta\tilde{\varphi} \mathcal{D}^{\alpha_2,\beta}_{a^+} u dtdx-\int_{\Omega}\int_a^T \Delta\tilde{\varphi} udtdx= \int_{\Omega}\int_a^T \vert u\vert ^p \tilde{\varphi} dtdx,
 \end{align}

for all compactly supported test function\thinspace $\tilde{\varphi} \in C^{1,2 }_{t,x}([a,T]\times%
\Omega)$.
\end{definition}
\begin{theo}
Let $u_0\in C_0(\Omega)$ and $u_{0}\geqslant0.$ If  
\[1<p<\frac{\alpha_2N+1}{(\alpha_2 N+1-2\alpha_2)},\]
 then the problem\thinspace(\ref{a}) does not admit  global nontrivial solutions in the space $C^\gamma_{1-\gamma,\log}([a,b], C_0(\Omega))$.
\end{theo}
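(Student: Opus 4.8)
The plan is to use the nonlinear capacity (rescaled test function) method of Mitidieri--Pohozaev adapted to the Hilfer--Hadamard setting. I would argue by contradiction: suppose a global nontrivial solution $u$ exists on $[a,\infty)$. Choose a test function of separated form $\tilde\varphi(t,x)=\varphi_1(t)\varphi_2(x)$, where $\varphi_2$ is a fixed positive eigenfunction-type cutoff on $\Omega$ (or simply $\varphi_2\equiv 1$ with Dirichlet data absorbing the boundary terms, using $u=0$ on $\partial\Omega$), and $\varphi_1(t)$ is built so that its \emph{right-sided} Hadamard fractional derivatives are controlled. The natural choice, dictated by Lemma~\ref{w} (the integration-by-parts formula $\int \varphi\,\mathcal I^\alpha_{a^+}\Psi\,\frac{dt}{t}=\int \mathcal I^\alpha_{b^-}\varphi\,\Psi\,\frac{dt}{t}$) and Lemma~\ref{ww}, is $\varphi_1(t)=\bigl(1-\frac{\log(t/a)}{\log(T/a)}\bigr)_+^{\lambda}$ for $\lambda$ large, so that $\varphi_1(T)=0$ together with enough vanishing of its Hadamard derivative at $t=T$ to kill boundary contributions when transferring $\mathcal D^{\alpha_i,\beta}_{a^+}$ off $u$.

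The key steps, in order: (1) Insert this $\tilde\varphi$ into the weak formulation \eqref{weak}. (2) Move each operator off $u$: write $\mathcal D^{\alpha_i,\beta}_{a^+}u = \mathcal I^{\beta(1-\alpha_i)}_{a^+}\delta\,\mathcal I^{(1-\beta)(1-\alpha_i)}_{a^+}u$ from \eqref{h}, apply Lemma~\ref{w} twice (once for each Hadamard integral) and integrate by parts in $\delta=t\frac{d}{dt}$, picking up the initial term $(\mathcal D^{(\beta-1)(1-\alpha_1)}_{a^+}u)(a,x)=u_0(x)\geq 0$ with a favourable sign and no terminal term by the choice of $\varphi_1$; similarly transfer $\Delta$ onto $\varphi_2$. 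This yields
\[
\int_\Omega\!\int_a^T |u|^p\,\tilde\varphi\,dt\,dx + c\!\int_\Omega u_0\,\varphi_2\,\bigl(\text{something}\bigr)dx
= \int_\Omega\!\int_a^T u\,\bigl(\mathcal D^{\alpha_1,\beta}_{b^-}\tilde\varphi - \Delta\mathcal D^{\alpha_2,\beta}_{b^-}\tilde\varphi - \Delta\tilde\varphi\bigr)dt\,dx.
\]
(3) Bound the right-hand side by Hölder's inequality with exponents $p$ and $p'=p/(p-1)$ against the weight $\tilde\varphi^{1/p}$, i.e. $|u\,\Psi|=|u|\tilde\varphi^{1/p}\cdot|\Psi|\tilde\varphi^{-1/p}$, getting $\bigl(\iint|u|^p\tilde\varphi\bigr)^{1/p}$ times an explicit $T$-dependent factor. (4) Estimate that factor: here one computes the right Hadamard derivatives of $\varphi_1$ via Lemma~\ref{ww}, uses the self-similar structure to scale out powers of $\log(T/a)$, and combines with the spatial scaling of $\varphi_2$ (scale length $\sim(\log(T/a))^{\alpha_2}$, say) so that the terms involving $\Delta\varphi_2$ and $\mathcal D^{\alpha_2}_{b^-}$ balance — this is exactly where the exponent $\frac{\alpha_2 N+1}{\alpha_2 N+1-2\alpha_2}$ emerges, from matching the $N$-dimensional volume factor against the fractional time weight. (5) Absorb $\bigl(\iint|u|^p\tilde\varphi\bigr)^{1/p}$ into the left side to obtain $\iint|u|^p\tilde\varphi \leq C\,(\log(T/a))^{\theta}$ with $\theta<0$ precisely when $p$ is subcritical; let $T\to\infty$ to force $\iint_{a}^{\infty}|u|^p\,dx\,dt=0$ over all of $\Omega$, hence $u\equiv 0$, contradicting nontriviality (the initial term with $u_0\geq0$ handles the borderline/critical discussion if one wishes to push it).

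The main obstacle I anticipate is Step (2)--(4): correctly carrying the Hilfer--Hadamard derivative over to the test function and identifying the \emph{right-sided} operator $\mathcal D^{\alpha_i,\beta}_{b^-}$ that appears, then getting sharp bounds on $\mathcal D^{\alpha_i}_{b^-}\varphi_1$ for the logarithmic self-similar profile. Lemma~\ref{ww} gives the representation with a boundary term $\varphi(b)(\log\frac{b}{t})^{-\alpha}$, which is why $\varphi_1$ must vanish at $T$ to high order; one must check that $\lambda$ can be chosen (depending on $\alpha_1,\beta$) so that all terminal contributions vanish while the resulting negative-index weights remain integrable near $t=T$, and that the regularity $u\in C^\gamma_{1-\gamma,\log}([a,b],C_0(\Omega))$ is exactly enough to justify each application of Lemmas~\ref{w}--\ref{wwww} (in particular the semigroup law \eqref{oj} on $[a,T]$). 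The scaling bookkeeping that produces the critical exponent is routine once the correct powers of $\log(T/a)$ attached to time and to the spatial cutoff are pinned down.
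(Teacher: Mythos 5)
Your skeleton (contradiction, separated test function, transferring the Hilfer--Hadamard operators onto the test function via Lemma~\ref{w} and Lemma~\ref{ww}, keeping the initial term with $u_0\geqslant 0$ and a favourable sign, then Young/H\"older and scaling) is indeed the paper's skeleton; the paper additionally inserts a factor $1/t$ in the test function, $\tilde\varphi=\varphi_1(t)\varphi_2(x)/t$, so that the Haar measure $dt/t$ appears and Lemma~\ref{w} is literally applicable, and it keeps a compactly supported spatial cutoff $\varphi_2=[\Phi(\Vert x\Vert/T^{\alpha_2})]^{\mu}$, $\mu\geqslant 2p/(p-1)$ --- your alternative $\varphi_2\equiv 1$ is not admissible in Definition~\ref{ab} (test functions are compactly supported, and dropping the cutoff would force you to control boundary flux terms of unknown sign for a sign-changing $u$).

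The genuine gap is in steps (4)--(5), precisely the part you call routine bookkeeping. With your choices $\varphi_1(t)=\bigl(1-\log(t/a)/\log(T/a)\bigr)_+^{\lambda}$ and spatial scale $R=S^{\alpha_2}$, $S=\log(T/a)$, the three capacity terms scale like $S^{1+\alpha_2N-\alpha_1p'}$, $S^{1+\alpha_2N-3\alpha_2p'}$ and $S^{1+\alpha_2N-2\alpha_2p'}$. The stated hypothesis $p<\frac{\alpha_2N+1}{\alpha_2N+1-2\alpha_2}$ is exactly $\alpha_2N+1-2\alpha_2p'<0$, so it kills the last two, but it says nothing about the first one, which carries $\alpha_1$: for instance $N=1$, $\alpha_2=2/5$, $\alpha_1=1/2$, $p=2$ satisfies the hypothesis while $1+\alpha_2N-\alpha_1p'=2/5>0$, so the $\mathcal{D}^{\alpha_1,\beta}$ capacity term does not tend to zero and the contradiction never materializes. (Your claimed balance between the $\Delta\varphi_2$ term and the $\mathcal{D}^{\alpha_2}_{T^-}$ term cannot determine $R$ either: the latter is always the former times $S^{-\alpha_2p'}$.) The fact that the critical exponent involves only $\alpha_2$ signals that the fractional-in-time terms must be neutralized by a separate device, and this is what the paper does: it takes $\varphi_1\equiv 1$ on $[a,\theta T]$, $\varphi_1(T)=0$, with $\theta=1-\mathrm{e}^{-T}$, so every term containing $\delta\varphi_1$ (i.e.\ every term where $\mathcal{I}^{1-\alpha_i}_{T^-}$ acts) is supported in $[\theta T,T]$ and is $O(\mathrm{e}^{-T})$ up to powers of $T$; the surviving algebraic contribution comes solely from the $-\Delta u$ term with the spatial scale $T^{\alpha_2}$ (a power of $T$, not of $\log T$), giving $T^{\alpha_2N-2\alpha_2p'+1}\to 0$ exactly under the stated condition. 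Without this (or an equivalent) mechanism, your argument as proposed does not close for the full range of parameters in the theorem.
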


\begin{proof}
We assume the contrary. Let $\Phi\in C_0^\infty([0,\infty))$  be a decreasing function satisfying
\begin{align*}
\Phi(\sigma)=
\begin{cases}
1,\hspace*{0.5cm} 0\leqslant \sigma\leqslant 1,\\
0,\hspace*{0.5cm} \sigma\geqslant 2.
\end{cases}
\end{align*}
We define the function $\tilde{\varphi}(t,x)$ as follows
\begin{align}
\tilde{\varphi}(t,x)=\frac{\varphi_1(t)}{t}\varphi_2(x),
\end{align}
with $\varphi_1(t)\in C^1([a,\infty))$, $\varphi_1(t)\geqslant 0$ and $\varphi_1(t)$ is non-increasing such that
\begin{align}\label{yt}
\varphi_1(t)=
\begin{cases}
1,\hspace*{0.5cm} 0<a\leqslant t\leqslant \theta T,\hspace*{0.5cm} 0<\theta<1,\\
0,\hspace*{0.5cm} t\geqslant T,
\end{cases}
\end{align}
 for $ T> a>0$ and we choose 
\begin{align}
\varphi_2(x)=\bigg[\Phi\bigg(\frac{\Vert x\Vert}{T^{\alpha_2}}\bigg)\bigg]^\mu,\hspace*{0.5cm} \mu\geqslant  \frac{2p}{p-1}.
\end{align} 
 Equality\thinspace(\ref{weak}) actually reads 
 \begin{align}
 &\int_{\Omega_1}\int_a^T \varphi_2(x) \varphi_1(t) \mathcal{D}^{\alpha_1,\beta}_{a^+} u \frac{dt}{t}dx-\int_{\Omega_1}\int_a^T \Delta\varphi_2(x) \varphi_1(t) \mathcal{D}^{\alpha_2,\beta}_{a^+} u \frac{dt}{t}dx-\int_{\Omega_1}\int_a^T \Delta\varphi_2(x) \varphi_1(t) u\frac{dt}{t}dx\nonumber\\&= \int_{\Omega_1}\int_a^T \vert u\vert ^p \varphi_2(x) \varphi_1(t) \frac{dt}{t}dx.
 \end{align}
where $\Omega_1 :=\lbrace x\in\Omega:\hspace*{0.2cm}\Vert x\Vert\leqslant 2T^{\alpha_2}\rbrace$. From the definition of $\mathcal{D}^{\alpha,\beta}_{a^+} u$, we can re-write the above equation as
 \begin{align}
 &\int_{\Omega_1}\int_a^T \varphi_2(x) \varphi_1(t) \mathcal{I}^{\beta(1-\alpha_1)}_{a^+}\bigg(t\frac{d}{dt}\bigg)\bigg(\mathcal{I}^{(1-\beta)(1-\alpha_1)}_{a^+}u\bigg) \frac{dt}{t}dx\nonumber\\&-\int_{\Omega_1}\int_a^T \Delta\varphi_2(x) \varphi_1(t) \mathcal{I}^{\beta(1-\alpha_2)}_{a^+}\bigg(t\frac{d}{dt}\bigg)\bigg(\mathcal{I}^{(1-\beta)(1-\alpha_2)}_{a^+}u\bigg) \frac{dt}{t}dx-\int_{\Omega_1}\int_a^T \Delta\varphi_2(x) \varphi_1(t) u\frac{dt}{t}dx\nonumber\\&= \int_{\Omega_1}\int_a^T \vert u\vert ^p \varphi_2(x) \varphi_1(t) \frac{dt}{t}dx.
 \end{align}
 By Definition\thinspace\ref{loo}, we have $\bigg(\log\frac{t}{a}\bigg)^{1-\gamma} \mathcal{D}^\gamma _{a^+}u$ is continuous on $[a,T]$ implies that
 \begin{align*}
 \bigg\vert\bigg(\log\frac{t}{a}\bigg)^{1-\gamma} \mathcal{D}^\gamma _{a^+}u\bigg\vert\leqslant M, \hspace*{0.3cm}\forall t\in [a, T],
 \end{align*}
 for some positive constant $M$ (the constant $M$ will be a generic constant which may change at different places). Therefore
 \begin{align*}
 \int_a^T \bigg\vert t^{-1/p}\bigg( \mathcal{D}^\gamma_{a^+} u\bigg)(t)\bigg\vert^{p'}\frac{dt}{t}&\leqslant M^{p'} \int_a^T  t^{1-p'}\bigg(\log\frac{t}{a}\bigg)^{-p'(1-\gamma)}\frac{dt}{t}\nonumber\\
 &\leqslant M^{p'} \int_a^\infty  t^{1-p'}\bigg(\log\frac{t}{a}\bigg)^{-p'(1-\gamma)}\frac{dt}{t},
 \end{align*}
where $\frac{1}{p}+\frac{1}{p'}=1$. We introduce the following scaled variable
\begin{align*}
w=(p'-1)\log\big(t/a\big).
\end{align*}
Then
\begin{align}\label{n}
\int_a^T \bigg\vert t^{-1/p}\bigg(\mathcal{D}^\gamma_{a^+} u\bigg)(t)\bigg\vert^{p'}\frac{dt}{t}&\leqslant \frac{M^{p'} a^{1-p'}}{(p'-1)^{1-p'(1-\gamma)}}\int_0^\infty w^{-p'(1-\gamma)}\rm{e}^{-w}dw\nonumber\\
&\leqslant \frac{M^{p'} a^{1-p'}}{(p'-1)^{1-p'(1-\gamma)}} \Gamma(1-p'(1-\gamma))<\infty.
\end{align}
Consequently, $\big(\mathcal{D}^\gamma_{a^+} u\big)(t)\in X^{p'}_{-1/p}$. Thus it follows from Lemma\thinspace\ref{w} that
 \begin{align}\label{bp}
 &\int_{\Omega_1}\int_a^T \varphi_2(x)\mathcal{I}^{\beta(1-\alpha_1)}_{T^-} \varphi_1(t)\frac{d}{dt}\mathcal{I}^{(1-\beta)(1-\alpha_1)}_{a^+}u dtdx\nonumber\\&-\int_{\Omega_1}\int_a^T \Delta\varphi_2(x) \mathcal{I}^{\beta(1-\alpha_2)}_{T^-} \varphi_1(t)\frac{d}{dt}\mathcal{I}^{(1-\beta)(1-\alpha_2)}_{a^+}u dtdx-\int_{\Omega_1}\int_a^T \Delta\varphi_2(x) \varphi_1(t) u\frac{dt}{t}dx\nonumber\\&= \int_{\Omega_1}\int_a^T \vert u\vert ^p \varphi_2(x) \varphi_1(t) \frac{dt}{t}dx.
 \end{align}
  Using integration by parts  in (\ref{bp}), we obtain
  \begin{align}\label{pu}
& \int_{\Omega_1} \varphi_2(x)\bigg[(\mathcal{I}^{\beta(1-\alpha_1)}_{T^-} \varphi_1)(t)(\mathcal{I}^{(1-\beta)(1-\alpha_1)}_{a^+}u)(t,x)\bigg]_{t=a}^Tdx\nonumber\\&-\int_{\Omega_1}\int_a^T \varphi_2(x)\frac{d}{dt}\mathcal{I}^{\beta(1-\alpha_1)}_{T^-} \varphi_1(t)\mathcal{I}^{(1-\beta)(1-\alpha_1)}_{a^+}u dtdx\nonumber\\&
-\int_{\Omega_1}\Delta \varphi_2(x)\bigg[(\mathcal{I}^{\beta(1-\alpha_2)}_{T^-} \varphi_1)(t)(\mathcal{I}^{(1-\beta)(1-\alpha_2)}_{a^+}u)(t,x)\bigg]_{t=a}^T\nonumber\\&+\int_{\Omega_1}\int_a^T \Delta\varphi_2(x)\frac{d}{dt}\mathcal{I}^{\beta(1-\alpha_2)}_{T^-} \varphi_1(t)\mathcal{I}^{(1-\beta)(1-\alpha_2)}_{a^+}u dtdx-\int_{\Omega_1}\int_a^T \Delta\varphi_2(x) \varphi_1(t) u\frac{dt}{t}dx\nonumber\\&= \int_{\Omega_1}\int_a^T \vert u\vert ^p \varphi_2(x) \varphi_1(t) \frac{dt}{t}dx.
  \end{align}
  Since $\varphi_1\in C^1[a,b]$, then there exists a constant $M>0$ such that 
 $
 \vert \varphi_1(t)\vert \leqslant M.
$
 Hence
 \begin{align*}
 \vert \mathcal{I}^{\beta(1-\alpha_i)}_{T^-} \varphi_1(t)\vert &\leqslant \frac{M}{\Gamma(\beta(1-\alpha_i))}\int_t^T \bigg(\log\frac{\tau}{t}\bigg)^{\beta(1-\alpha_i)-1} \frac{d\tau}{\tau}\\
 &\leqslant \frac{M}{\Gamma(\beta(1-\alpha_i)+1)} \bigg(\log\frac{T}{t}\bigg)^{\beta(1-\alpha_i)},
 \end{align*}
where $i=1,2$. We see that $(\mathcal{I}^{\beta(1-\alpha_i)}_{T^-} \varphi_1)(T)=\lim _{t\longrightarrow T}\mathcal{I}^{\beta(1-\alpha_i)}_{T^-} \varphi_1(t)=0$ and 
  \begin{align}\label{bi}
 \bigg(\mathcal{I}^{(1-\beta)(1-\alpha_1)}_{a^+}u\bigg)(a^+,x)=\bigg(\mathcal{D}^{(\beta-1)(1-\alpha_1)}_{a^+} u\bigg)(a^+,x)=u_0(x).
 \end{align}
 It appears from Lemma\thinspace\ref{www} that
 \begin{align*}
 \bigg\vert \bigg(\log \frac{t}{a}\bigg)^{1-\gamma} u(.,x)\bigg\vert \leqslant M,
 \end{align*}
for $1-\gamma< (1-\beta)(1-\alpha_2)$. We can deduce
 \begin{align}\label{jy}
\bigg \vert\mathcal{I}^{(1-\beta)(1-\alpha_2)}_{a^+}u\bigg\vert&\leqslant\frac{1}{\Gamma((1-\beta)(1-\alpha_2))}\int_a^t \bigg\vert\bigg(\log\frac{t}{\tau}\bigg)^{(1-\beta)(1-\alpha_2)-1} u\frac{d\tau}{\tau}\bigg\vert\nonumber\\
 &\leqslant M \bigg(\log\frac{t}{a}\bigg)^{\gamma-1}\int_a^t \bigg(\log\frac{t}{\tau}\bigg)^{(1-\beta)(1-\alpha_2)-1} \frac{d\tau}{\tau}\nonumber\\
 &\leqslant M \bigg(\log\frac{t}{a}\bigg)^{\gamma-1+
 (1-\beta)(1-\alpha_2)}.
 \end{align}
 Therefore 
 \begin{align}\label{biii}
\bigg( \mathcal{I}^{(1-\beta)(1-\alpha_2)}_{a^+}u\bigg)(a,x)=0.
 \end{align}
Taking into account the above relations\thinspace(\ref{bi}) and (\ref{biii}) in (\ref{pu}), we find 
 \begin{align}\label{b}
 &-\int_{\Omega_1}\int_a^T \varphi_2(x)\frac{d}{dt}\mathcal{I}^{\beta(1-\alpha_1)}_{T^-} \varphi_1(t)\mathcal{I}^{(1-\beta)(1-\alpha_1)}_{a^+}u dtdx\nonumber\\&+\int_{\Omega_1}\int_a^T \Delta\varphi_2(x)\frac{d}{dt}\mathcal{I}^{\beta(1-\alpha_2)}_{T^-} \varphi_1(t)\mathcal{I}^{(1-\beta)(1-\alpha_2)}_{a^+}u dtdx-\int_{\Omega_1}\int_a^T \Delta\varphi_2(x) \varphi_1(t) u\frac{dt}{t}dx\nonumber\\&= \int_{\Omega_1}\int_a^T \vert u\vert ^p \varphi_2(x) \varphi_1(t) \frac{dt}{t}dx+\int_{\Omega_1}\varphi_2(x)(\mathcal{I}^{\beta(1-\alpha_1)}_{T^-} \varphi_1)(a)u_0(x)dx.
 \end{align} 
  Let 
 \begin{align*}
 &\mathcal{A}_1=-\int_{\Omega_1}\int_a^T \varphi_2(x)\frac{d}{dt}\mathcal{I}^{\beta(1-\alpha_1)}_{T^-} \varphi_1(t)\mathcal{I}^{(1-\beta)(1-\alpha_1)}_{a^+}u dtdx,\\
 \end{align*}
 and
 \begin{align*}
 &\mathcal{A}_2=\int_{\Omega_1}\int_a^T \Delta\varphi_2(x)\frac{d}{dt}\mathcal{I}^{\beta(1-\alpha_2)}_{T^-} \varphi_1(t)\mathcal{I}^{(1-\beta)(1-\alpha_2)}_{a^+}u dtdx.
 \end{align*}
  Multiplying $\mathcal{A}_1$ and $\mathcal{A}_2$ by $t/t$, we see that 
  \begin{align}
 \mathcal{A}_1=\int_{\Omega_1}\int_a^T \varphi_2(x)\bigg(- t\frac{d}{dt}\bigg)\mathcal{I}^{\beta(1-\alpha_1)}_{T^-} \varphi_1(t)\mathcal{I}^{(1-\beta)(1-\alpha_1)}_{a^+}u \frac{dt}{t}dx,
 \end{align}
 and
  \begin{align}
 \mathcal{A}_2=-\int_{\Omega_1}\int_a^T \Delta\varphi_2(x)\bigg(- t\frac{d}{dt}\bigg)\mathcal{I}^{\beta(1-\alpha_2)}_{T^-} \varphi_1(t)\mathcal{I}^{(1-\beta)(1-\alpha_2)}_{a^+}u \frac{dt}{t}dx.
 \end{align}
 Definition\thinspace\ref{yu} allows us to write
  \begin{align}
 \mathcal{A}_1=\int_{\Omega_1}\int_a^T \varphi_2(x)\bigg(\mathcal{D}^{1-\beta(1-\alpha_1)}_{T^-}\varphi_1\bigg)(t)\mathcal{I}^{(1-\beta)(1-\alpha_1)}_{a^+}u \frac{dt}{t}dx,
  \end{align}
  and
    \begin{align}
 \mathcal{A}_2=-\int_{\Omega_1}\int_a^T \Delta\varphi_2(x)\bigg(\mathcal{D}^{1-\beta(1-\alpha_2)}_{T^-}\varphi_1\bigg)(t)\mathcal{I}^{(1-\beta)(1-\alpha_2)}_{a^+}u \frac{dt}{t}dx.
 \end{align}
In view of Lemma\thinspace\ref{ww} and $(\ref{yt})$, we get
 \begin{align}\label{plr}
 \bigg(\mathcal{D}^{1-\beta(1-\alpha_i)}_{T^-}\varphi_1\bigg)(t)&= \frac{-1}{\Gamma(\beta(1-\alpha_i))}\int_t^T \bigg( \log\frac{s}{t}\bigg)^{\beta(1-\alpha_i)-1}\varphi_1'(s)ds\nonumber\\
 &=- \bigg( \mathcal{I}^{\beta(1-\alpha_i)}_{T^-}\delta\varphi_1\bigg)(t), \hspace*{0.5cm} i=1,2.
 \end{align}
According to (\ref{plr}), we have
 \begin{align}
 &\mathcal{A}_1=-\int_{\Omega_1}\int_a^T \varphi_2(x)\bigg(\mathcal{I}^{\beta(1-\alpha_1)}_{T^-}\delta\varphi_1\bigg)(t)\mathcal{I}^{(1-\beta)(1-\alpha_1)}_{a^+}u \frac{dt}{t}dx,
 \end{align}
 and
 \begin{align}
 &\mathcal{A}_2=\int_{\Omega_1}\int_a^T \Delta\varphi_2(x)\bigg(\mathcal{I}^{\beta(1-\alpha_2)}_{T^-}\delta\varphi_1\bigg)(t)\mathcal{I}^{(1-\beta)(1-\alpha_2)}_{a^+}u \frac{dt}{t}dx.
 \end{align}
Note that $\delta \varphi_1\in L^p([a,T])$ and by the same arguments as in the proof of $\big(\mathcal{D}^\gamma_{a^+} u\big)(t)\in X^{p'}_{-1/p}$ we may show that $ \mathcal{I}^{(1-\beta)(1-\alpha_i)}_{a^+}u\in X^{p'}_{-1/p}$ since $ \mathcal{I}^{1-\gamma}_{a^+}u\in C_{1-\gamma,\log}[a,T]$.\\

  Therefor, we see that Lemma\thinspace\ref{w} is satisfied
  \begin{align}
 &\mathcal{A}_1=-\int_{\Omega_1}\int_a^T \varphi_2(x)\delta\varphi_1(t)\bigg(\mathcal{I}^{\beta(1-\alpha_1)}_{a^+}\mathcal{I}^{(1-\beta)(1-\alpha_1)}_{a^+}u\bigg)(t,x) \frac{dt}{t}dx,\\
 &\mathcal{A}_2=\int_{\Omega_1}\int_a^T \Delta\varphi_2(x)\delta\varphi_1(t)\bigg(\mathcal{I}^{\beta(1-\alpha_2)}_{a^+}\mathcal{I}^{(1-\beta)(1-\alpha_2)}_{a^+}u\bigg)(t,x)\frac{dt}{t}dx.
 \end{align}
 Lemma\thinspace\ref{wwww} yields 
 \begin{align}
 &\mathcal{A}_1=-\int_{\Omega_1}\int_a^T \varphi_2(x)\delta\varphi_1(t)\bigg(\mathcal{I}^{1-\alpha_1}_{a^+}u\bigg)(t,x) \frac{dt}{t}dx,\\
 &\mathcal{A}_2=\int_{\Omega_1}\int_a^T \Delta\varphi_2(x)\delta\varphi_1(t)\bigg(\mathcal{I}^{1-\alpha_2}_{a^+}u\bigg)(t,x)\frac{dt}{t}dx.
 \end{align}
 By Definition\thinspace\ref{yuu} and the property of $\varphi_1$, we have
 \begin{align}\label{u}
 \mathcal{A}_1&\leqslant\frac{1}{\Gamma(1-\alpha_1)}\int_{\Omega_1}\int_a^T \varphi_2(x)\vert\delta\varphi_1(t)\vert\int_a^t \bigg(\log\frac{t}{s}\bigg)^{-\alpha_1}\frac{\vert u(s,x)\vert}{s}ds \frac{dt}{t}dx\nonumber\\
 &\leqslant\frac{1}{\Gamma(1-\alpha_1)}\int_{\Omega_1}\int_{\theta T}^T \varphi_2(x)\frac{\vert\delta\varphi_1(t)\vert}{\varphi_1^{1/p}(t)}\int_a^t \bigg(\log\frac{t}{s}\bigg)^{-\alpha_1}\frac{\vert u(s,x)\vert\varphi_1^{1/p}(s)}{s}ds \frac{dt}{t}dx\nonumber\\
 &\leqslant \int_{\Omega_1}\int_{\theta T}^T \varphi_2(x)\frac{\vert\delta\varphi_1(t)\vert}{\varphi_1^{1/p}(t)}\bigg(\mathcal{I}^{1-\alpha_1}_{a^+} \vert u\vert \varphi_1^{1/p}\bigg)(t,x)\frac{dt}{t}dx.
 \end{align}
A similar analysis for $\mathcal{A}_2$, we get 
 \begin{align}\label{om}
 \mathcal{A}_2&\leqslant\frac{1}{\Gamma(1-\alpha_2)}\int_{\Delta\Omega_1}\int_{\theta T}^T\vert\Delta \varphi_2(x)\vert\frac{\vert\delta\varphi_1(t)\vert}{\varphi_1^{1/p}(t)}\int_a^t \bigg(\log\frac{t}{s}\bigg)^{-\alpha_2}\frac{\vert u(s,x)\vert\varphi_1^{1/p}(s)}{s}ds \frac{dt}{t}dx\nonumber\\
 &\leqslant \int_{\Delta\Omega_1}\int_{\theta T}^T\vert\Delta \varphi_2(x)\vert\frac{\vert\delta\varphi_1(t)\vert}{\varphi_1^{1/p}(t)}\bigg(\mathcal{I}^{1-\alpha_2}_{a^+} \vert u\vert \varphi_1^{1/p}\bigg)(t,x) \frac{dt}{t}dx,
 \end{align}
where $\Delta\Omega_1 :=\lbrace x\in\Omega:\hspace*{0.2cm}T^{\alpha_2}\leqslant \Vert x\Vert \leqslant 2T^{\alpha_2}\rbrace$. We obtain from $(\ref{b})$, $(\ref{u})$ and $(\ref{om})$ 
 \begin{align*}
 &\int_{\Omega_1}\int_a^T \vert u\vert ^p \varphi_2(x) \varphi_1(t) \frac{dt}{t}dx+\int_{\Omega_1}\varphi_2(x)(\mathcal{I}^{\beta(1-\alpha_1)}_{T^-} \varphi_1)(a)u_0(x)dx \\&\leqslant \int_{\Omega_1}\int_{\theta T}^T \varphi_2(x)\frac{\vert\delta\varphi_1(t)\vert}{\varphi_1^{1/p}(t)}\bigg(\mathcal{I}^{1-\alpha_1}_{a^+} \vert u\vert \varphi_1^{1/p}\bigg)(t,x)\frac{dt}{t}dx\\&+\int_{\Delta\Omega_1}\int_{\theta T}^T\vert\Delta \varphi_2(x)\vert\frac{\vert\delta\varphi_1(t)\vert}{\varphi_1^{1/p}(t)}\bigg(\mathcal{I}^{1-\alpha_2}_{a^+} \vert u\vert \varphi_1^{1/p}\bigg)(t,x) \frac{dt}{t}dx+\int_{\Delta\Omega_1}\int_a^T \vert\Delta\varphi_2(x) \vert\varphi_1(t) u\frac{dt}{t}dx.
 \end{align*}
 The condition $u_0\geqslant 0$ yields 
 \begin{align}\label{kph}
 &\int_{\Omega_1}\int_a^T \vert u\vert ^p \varphi_2(x) \varphi_1(t) \frac{dt}{t}dx\nonumber\\&\leqslant \int_{\Omega_1}\int_{\theta T}^T \varphi_2(x)\frac{\vert\delta\varphi_1(t)\vert}{\varphi_1^{1/p}(t)}\bigg(\mathcal{I}^{1-\alpha_1}_{a^+} \vert u\vert \varphi_1^{1/p}\bigg)(t,x)\frac{dt}{t}dx\nonumber\\&+\int_{\Delta\Omega_1}\int_{\theta T}^T\vert\Delta \varphi_2(x)\vert\frac{\vert\delta\varphi_1(t)\vert}{\varphi_1^{1/p}(t)}\bigg(\mathcal{I}^{1-\alpha_2}_{a^+} \vert u\vert \varphi_1^{1/p}\bigg)(t,x) \frac{dt}{t}dx+\int_{\Delta\Omega_1}\int_a^T\vert \Delta\varphi_2(x)\vert \varphi_1(t) u\frac{dt}{t}dx.
 \end{align}
 It is easy to prove that $\big\vert u \varphi_1^{1/p}\big\vert \in X^{p'}_{-1/p}$ since 
    $u(.,x)\in C_{1-\gamma,\log}[a,T]$. Thus, we can apply Lemma\thinspace\ref{w} to obtain 
 \begin{align}\label{l}
& \int_{\Omega_1}\int_a^T \vert u\vert ^p \varphi_2(x) \varphi_1(t) \frac{dt}{t}dx \nonumber\\&\leqslant \int_{\Omega_1}\int_{\theta T}^T \varphi_2(x)\bigg(\mathcal{I}^{1-\alpha_1}_{T^-}\frac{\vert\delta\varphi_1(t)\vert}{\varphi_1^{1/p}(t)}\bigg)(t) \vert u\vert \varphi_1^{1/p}\frac{dt}{t}dx\nonumber\\&+\int_{\Delta\Omega_1}\int_{\theta T}^T \vert\Delta \varphi_2(x)\vert\bigg(\mathcal{I}^{1-\alpha_2}_{T^-}\frac{\vert\delta\varphi_1(t)\vert}{\varphi_1^{1/p}(t)}\bigg)(t) \vert u\vert \varphi_1^{1/p} \frac{dt}{t}dx+\int_{\Delta\Omega_1}\int_a^T \vert\Delta \varphi_2(x)\vert \varphi_1(t) u\frac{dt}{t}dx.
 \end{align}
  Using Young inequality with parameters \thinspace$p$\thinspace and\thinspace$ p'=\frac{p}{p-1}$, we have
 \begin{align}\label{i}
 &\int_{\Omega_1}\int_{\theta T}^T \varphi_2(x)\bigg(\mathcal{I}^{1-\alpha_1}_{T^-}\frac{\vert\delta\varphi_1(t)\vert}{\varphi_1^{1/p}(t)}\bigg)(t) \vert u\vert \varphi_1^{1/p}\frac{dt}{t}dx\nonumber\\&\leqslant \frac{1}{6p}\int_{\Omega_1}\int_{\theta T}^T \vert u\vert^p \varphi_1(t)\varphi_2(x)\frac{dt}{t}dx+\frac{6^{p'-1}}{p'}\int_{\Omega_1}\int_{\theta T}^T \varphi_2(x)\bigg\vert \mathcal{I}^{1-\alpha_1}_{T^-}\frac{\vert\delta\varphi_1(t)\vert}{\varphi_1^{1/p}(t)}\bigg\vert^{p'}\frac{dt}{t}dx\nonumber\\
 &\leqslant \frac{1}{6p}\int_{\Omega_1}\int_{a}^T \vert u\vert^p \varphi_1(t)\varphi_2(x)\frac{dt}{t}dx+\frac{6^{p'-1}}{p'}\int_{\Omega_1}\int_{\theta T}^T \varphi_2(x)\bigg\vert \mathcal{I}^{1-\alpha_1}_{T^-}\frac{\vert\delta\varphi_1(t)\vert}{\varphi_1^{1/p}(t)}\bigg\vert^{p'}\frac{dt}{t}dx,
 \end{align}
 \begin{align}\label{ii}
& \int_{\Delta\Omega_1}\int_{\theta T}^T \vert\Delta \varphi_2(x)\vert\bigg(\mathcal{I}^{1-\alpha_2}_{T^-}\frac{\vert\delta\varphi_1(t)\vert}{\varphi_1^{1/p}(t)}\bigg)(t) \vert u\vert \varphi_1^{1/p}\frac{dt}{t}dx\nonumber\\&\leqslant \frac{1}{6p}\int_{\Omega_1}\int_{\theta T}^T \vert u\vert^p \varphi_1(t)\varphi_2(x)\frac{dt}{t}dx+\frac{6^{p'-1}}{p'}\int_{\Delta\Omega_1}\int_{\theta T}^T \varphi_2(x)^{-p'/p}\vert\Delta \varphi_2(x)\vert^{p'}\bigg\vert \mathcal{I}^{1-\alpha_2}_{T^-}\frac{\vert\delta\varphi_1(t)\vert}{\varphi_1^{1/p}(t)}\bigg\vert^{p'}\frac{dt}{t}dx\nonumber\\
 &\leqslant \frac{1}{6p}\int_{\Omega_1}\int_{a}^T \vert u\vert^p \varphi_1(t)\varphi_2(x)\frac{dt}{t}dx+\frac{6^{p'-1}}{p'}\int_{\Delta\Omega_1}\int_{\theta T}^T \varphi_2(x)^{-p'/p}\vert\Delta \varphi_2(x)\vert^{p'}\bigg\vert \mathcal{I}^{1-\alpha_2}_{T^-}\frac{\vert\delta\varphi_1(t)\vert}{\varphi_1^{1/p}(t)}\bigg\vert^{p'}\frac{dt}{t}dx,
 \end{align}
 and
 \begin{align}\label{iii}
 & \int_{\Delta\Omega_1}\int_{a}^T \vert \Delta\varphi_2(x)\vert\varphi_1(t) u\frac{dt}{t}dx\nonumber\\&\leqslant \frac{1}{6p}\int_{\Omega_1}\int_{a}^T \vert u\vert^p \varphi_1(t)\varphi_2(x)\frac{dt}{t}dx+\frac{6^{p'-1}}{p'}\int_{\Delta\Omega_1}\int_{a}^T \varphi_2(x)^{-p'/p}\vert\Delta \varphi_2(x)\vert^{p'}\varphi_1(t) \frac{dt}{t}dx.
 \end{align}
 Using inequalities $(\ref{l})$, $(\ref{i}),$ $(\ref{ii})$ and $(\ref{iii})$, we obtain the inequality
 \begin{align}\label{lo}
 &\bigg(1-\frac{1}{2p}\bigg)\int_{\Omega_1}\int_a^T \vert u\vert ^p \varphi_2(x) \varphi_1(t) \frac{dt}{t}dx\nonumber\\&\leqslant \frac{6^{p'-1}}{p'}\int_{\Omega_1}\int_{\theta T}^T \varphi_2(x)\bigg\vert \mathcal{I}^{1-\alpha_1}_{T^-}\frac{\vert\delta\varphi_1(t)\vert}{\varphi_1^{1/p}(t)}\bigg\vert^{p'}\frac{dt}{t}dx\nonumber\\&+\frac{6^{p'-1}}{p'}\int_{\Delta\Omega_1}\int_{\theta T}^T \varphi_2(x)^{-p'/p}\vert\Delta \varphi_2(x)\vert^{p'}\bigg\vert \mathcal{I}^{1-\alpha_2}_{T^-}\frac{\vert\delta\varphi_1(t)\vert}{\varphi_1^{1/p}(t)}\bigg\vert^{p'}\frac{dt}{t}dx\nonumber\\&+\frac{6^{p'-1}}{p'}\int_{\Delta\Omega_1}\int_{a}^T \varphi_2(x)^{-p'/p}\vert\Delta \varphi_2(x)\vert^{p'}\varphi_1(t) \frac{dt}{t}dx.
 \end{align}
 We introduce the following scaled variable
 \begin{align*}
 \tau=\frac{t}{T},\hspace*{0.2cm} T\gg 1.
 \end{align*}
 It appears that
 \begin{align*}
 \int_{\theta T}^T\bigg\vert \mathcal{I}^{1-\alpha_i}_{T^-}\frac{\vert\delta\varphi_1(t)\vert}{\varphi_1^{1/p}(t)}\bigg\vert^{p'}\frac{dt}{t}&=\frac{1}{\Gamma^{p'}(1-\alpha_i)}\int_{\theta T}^T\bigg( \int_t^T \bigg(\log\frac{s}{t}\bigg)^{-\alpha_i}\frac{\vert\delta\varphi_1(s)\vert}{\varphi_1^{1/p}(s)}\frac{ds}{s}\bigg)^{p'}\frac{dt}{t}\nonumber\\
 &=\frac{1}{\Gamma^{p'}(1-\alpha_i)}\int_{\theta }^1\bigg( \int_{\tau T}^T \bigg(\log\frac{s}{\tau T}\bigg)^{-\alpha_i}\frac{\vert\varphi'_1(s)\vert}{\varphi_1^{1/p}(s)}ds\bigg)^{p'}\frac{d\tau}{\tau},
 \end{align*}
for $i=1,2$. Another change of variable $r=\frac{s}{T}$ yields
 \begin{align}\label{kp}
\int_{\theta }^1\bigg( \int_{\tau T}^T \bigg(\log\frac{s}{\tau T}\bigg)^{-\alpha_i}\frac{\vert\varphi'_1(s)\vert}{\varphi_1^{1/p}(s)}ds\bigg)^{p'}\frac{d\tau}{\tau}=\int_{\theta }^1\bigg( \int_{\tau }^1 \bigg(\log\frac{r}{\tau}\bigg)^{-\alpha_i}\frac{\vert\varphi'_1(r)\vert}{\varphi_1^{1/p}(r)}dr\bigg)^{p'}\frac{d\tau}{\tau}.
 \end{align}
 Since $\varphi_1\in C^1[a,\infty)$, we assume without loss of generality that
 \begin{align*}
\int_{\tau }^1 \bigg(\log\frac{r}{\tau}\bigg)^{-\alpha_i}\frac{\vert\varphi'_1(r)\vert}{\varphi_1^{1/p}(r)}dr\leqslant M,
 \end{align*}
 for $M>0$. Then Eq\thinspace(\ref{kp}) becomes
 \begin{align*}
 \frac{1}{\Gamma^{p'}(1-\alpha_i)}\int_{\theta }^1\bigg( \int_{\tau }^1 \bigg(\log\frac{r}{\tau}\bigg)^{-\alpha_i}\frac{\vert\varphi'_1(r)\vert}{\varphi_1^{1/p}(r)}dr\bigg)^{p'}\frac{d\tau}{\tau} < C\int_{\theta }^1 d\tau .
 \end{align*}
Putting $\theta=1-{\rm e}^{-T}$ with $T>a>0$, we obtain
 \begin{align}\label{ll}
 \int_{\theta T}^T\bigg\vert \mathcal{I}^{1-\alpha_i}_{T^-}\frac{\vert\delta\varphi_1(t)\vert}{\varphi_1^{1/p}(t)}\bigg\vert^{p'}\frac{dt}{t}\leqslant C{\rm e}^{-T},
 \end{align}
for some positive $C$ independent of $T$.\\

Next, using the change of variable $y=\frac{\Vert x\Vert}{T^{\alpha_2}}$, we get
\begin{align}\label{ta}
\int_{\Delta\Omega_1}\varphi_2(x)^{\frac{-p'}{p}}\big\vert \Delta\varphi_2(x)\big\vert^{p'}dx&=T^{\alpha_2 N-2\alpha_2 p'}\int_{1\leqslant \Vert y\Vert\leqslant 2}\bigg[\Phi(\Vert y\Vert)\bigg]^{-\frac{\mu}{p-1}}\bigg\vert\Delta\bigg[\Phi(\Vert y\Vert)\bigg]^\mu\bigg\vert^{\frac{p}{p-1}}dy\nonumber\\
&\leqslant T^{\alpha_2 N-2\alpha_2 p'}\int_{1\leqslant \Vert y\Vert\leqslant 2}\bigg[\Phi(\Vert y\Vert)\bigg]^{\frac{\mu(p-1)-2p}{p-1}}dy\nonumber\\
&\leqslant T^{\alpha_2 N-2\alpha_2 p'}.
\end{align}
Combining $(\ref{lo})$, (\ref{ll}) and (\ref{ta}), we get
 \begin{align}\label{kl}
 \frac{1}{2p}\int_{\Omega_1}\int_a^T \vert u\vert ^p \varphi_2(x) \varphi_1(t) \frac{dt}{t}dx< C{\rm e}^{-T} T^{\alpha_2 N}+C T^{\alpha_2 N-2\alpha_2 p'}+CT^{\alpha_2 N-2\alpha_2 p'+1},
 \end{align}
when \(T\longrightarrow +\infty\), we obtain
  \begin{align*}
\lim_{T\longrightarrow +\infty}{\rm e}^{-T} T^{\alpha_2 N}=0,
 \end{align*}
 and
 \begin{align*}
 \lim_{T\longrightarrow +\infty} T^{\alpha_2 N-2\alpha_2 p'+1}=0.
 \end{align*}
Therefore
 \begin{align}
 \lim_{T\rightarrow\infty}\int_{\Omega_1}\int_a^T \vert u\vert^p\varphi_1(t)\varphi_2(x)\frac{dt}{t}dx=0.
 \end{align}
 This leads to a contradiction.
 \end{proof}

 \nocite{*}

\begin{thebibliography}{9}
\bibitem{kkkk}
B. Ahmad, A. Alsaedi, S. Ntouyas, J. Tariboon, \emph{Hadamard-type fractional differential equations, inclusions and inequalities}, Springer,
  (2017).
\bibitem{al2011blow}
A. B. Al'shin, M. O. Korpusov, A. G. Sveshnikov, \emph{Blow-up in nonlinear Sobolev type equations}, Walter de Gruyter
  (2011).
  \bibitem{dangasymptotic}
J. Dang, Q. Hu, H. Zhang, \emph{Asymptotic stability and blow-up of solutions for the generalized Boussineq equation with nonlinear boundary condition}, Journal of Mathematical Analysis
  (2018),  93--113.
  \bibitem{fujita1966blowing}
H. Fujita, \emph{On the blowing up of solutions of the {C}auchy problem for $ u_t=\bigtriangleup u+\vert u\vert^{\alpha+1}$}, Journal of the Faculty of Science, University of Tokyo. Sect. 1, Mathematics, astronomy, physics (1966),  109--124.
  \bibitem{k}
A. A. Kilbas, B. Bonilla, J. J. Trujillo, \emph{Existence and uniqueness theorems for nonlinear fractional differential equations}, Demonstratio Mathematica
  (2000),  583--602.
  \bibitem{kk}
A. A. Kilbas, S. A. Marzan, \emph{Nonlinear differential equations with the {C}aputo fractional derivative in the space of continuously differentiable functions}, Differential Equations
  (2005),  84--89.
  \bibitem{korpusov2003three}
M. O. Korpusov, A. G. Sveshnikov, \emph{Three-dimensional nonlinear evolution equations of pseudoparabolic type in problems of mathematical physics}, Zhurnal Vychislitel'noi Matematiki i Matematicheskoi Fiziki(2003),  1835--1869.
  \bibitem{lai2014local}
S. Lai, H. Yan, Y. Wang, \emph{The local well-posedness of solutions for a nonlinear pseudo-parabolic equation}, Numerical Methods for Partial Differential Equations
  (2015),  253--267.
  \bibitem{liu2012asymptotic}
A. Cheng, H. Wang, K. Wang, \emph{Asymptotic behaviour of solutions to some pseudoparabolic equations}, Applied Mathematics Letters
  (2012),  111--114.
  \bibitem{mi}
E. Mitidieri, S. Pokhozhaev, \emph{A priori estimates and the absence of solutions of nonlinear partial differential equations and inequalities}, Proceedings of the Steklov Institute of Mathematics (2001),  1--383.
  \bibitem{padra3n2004effect}
V. Padr{\'o}n, \emph{Effect of aggregation on population recovery modeled by a forward-backward pseudoparabolic equation}, Transactions of the American Mathematical Society
  (2004),  2739--2756.
  \bibitem{kkk}
M. Qassim, K. Furati, N. Tatar, \emph{On a Differential Equation Involving {H}ilfer-{H}adamard Fractional Derivative}, Abstract and Applied Analysis
  (2012),  1--17.
\bibitem{sun2019global}
F. Sun, L. Liu, Y. Wu, \emph{Global existence and finite time blow-up of solutions for the semilinear pseudo-parabolic equation with a memory term}, Applicable Analysis
  (2019),  735--755.
 \bibitem{weissler1981existence}
F. B. Weissler, \emph{Existence and non-existence of global solutions for a semilinear heat equation}, Israel Journal of Mathematics
  (1981),  29--40.
  \bibitem{xu2018lifespan}
G. Y. Xu, J. Zhou, \emph{Lifespan for a semilinear pseudo-parabolic equation}, Mathematical Methods in the Applied Sciences
  (2018),  705--713.
  \bibitem{xu2013global}
R. Xu, J. Su, \emph{Global existence and finite time blow-up for a class of semilinear pseudo-parabolic equations}, Journal of Functional Analysis
  (2013),  2732--2763.
  \end{thebibliography}

 \it

 \noindent

\end{document}